\documentclass{amsart}
\usepackage{graphicx} 
\usepackage{amsfonts,amsmath,amssymb,amsthm}
\usepackage{mathtools}
\usepackage{hyperref}

\usepackage[%
    backend = biber,
    style = numeric,
    citestyle = numeric,
    maxalphanames = 99,
    maxnames = 99,
    backref = true,
    isbn = false,
    doi = false,
    url = false,
    ]{biblatex} 
\newbibmacro{string+url}[1]{
    \iffieldundef{url}{
          #1
    }{
      \href{\thefield{url}}{#1}
    }
}

\DeclareFieldFormat
    [article,misc,inbook,incollection,inproceedings,patent,thesis,unpublished]
  {title}{\usebibmacro{string+url}{\mkbibquote{#1\isdot}}}

\theoremstyle{plain}
\newtheorem{thm}{Theorem}[section]

\newtheorem{cnj}[thm]{Conjecture}
\newtheorem{crl}[thm]{Corollary}

\newtheorem{lm}[thm]{Lemma}

\newtheorem{prp}[thm]{Proposition}
    
\newcommand{\bbN}{\mathbb{N}}

\newcommand{\oGL}{\operatorname{GL}}
\newcommand{\oO}{\operatorname{O}}
\newcommand{\oSp}{\operatorname{Sp}}
\newcommand{\oSpec}{\operatorname{Spec}}
\newcommand{\oSym}{\operatorname{Sym}}

\newcommand{\bV}{\mathbf{V}}

\title{Topological Noetherianity for powers of algebraic representations}
\author{Alessandro Danelon}
\thanks{Department of Mathematics, University of Michigan, 530 Church St, Ann Arbor, Michigan 48109, email: \href{adanelon@umich.edu}{adanelon@umich.edu}, ORCID:0000-0003-4574-9552}
\date{April 6, 2026}

\begin{document}

\begin{abstract}
    Powers of a polynomial $\oGL$-representation are topologically Noetherian under the action of $\oSym \times \oGL$.
    We extend this result to powers of algebraic representations of the orthogonal and the symplectic groups, proving topological Noetherianity under the action of $\oSym \times \oO$ and $\oSym \times \oSp$ respectively.
    This work builds on \cite{chiu-danelon-draisma-eggermont-farooq} and \cite{eggermont-snowden}, and it provides further evidence that infinite powers of topologically Noetherian varieties remain topologically Noetherian up to permutations.
\end{abstract}

\maketitle

\section{Introduction}
Let $G$ be a group acting on a topological space $X$.
We say that $X$ is \textit{topologically $G$-Noetherian} if every descending chain of closed $G$-stable subsets stabilizes.
We consider algebraic varieties of tensors that are stable under the simultaneous and independent actions of the infinite symmetric group and the infinite orthogonal (or symplectic) group, and we show that they are topologically Noetherian up to the action of these groups.

A long-standing question in this area of mathematics concerns the boundary of Noetherianity for tensor varieties, see e.g. \cite{draisma, eggermont-snowden, nagpal-sam-snowden, bik-danelon-draisma:topologicalN, chiu-danelon-draisma-eggermont-farooq, ganapathy:nonnoethtwo}, and this work claims a novel class of examples in the Noetherian side.
This paper strengthens the idea (first introduced in \cite{chiu-danelon-draisma-eggermont-farooq}) that topological Noetherianity holds for infinite powers of topologically Noetherian varieties up to permutations.
A natural follow-up step in this philosophy is to show that infinite powers of half-spin varieties (defined in \cite{chiu-draisma-eggermont-seynnaeve-tairi}) are topologically Noetherian up to the action of the half-spin group and the infinite symmetric group.
We plan to work on this latter conjecture in the future.

\subsection{Main result}
Let $k$ be an algebraically closed field of characteristic zero, and define the infinite general linear group $\oGL$ as the union of the general linear groups $\oGL_n(k)$ on $k^n$.
We can think of the elements of $\oGL$ as $\mathbb{N}\times \mathbb{N}$-matrices with a $g\in \oGL_n(k)$ in the top left corner and an infinite identity matrix in the bottom right corner.
The infinite symmetric group $\oSym$ is the group of permutations of $\mathbb{N}$.
Let $\oO = \bigcup_{n\geq 1}\oO_n(k)$ be the infinite orthogonal group and $\oSp = \bigcup_{n\geq 1}\oSp_{2n}(k)$ be the infinite symplectic group.

A \textit{polynomial} representation of $\oGL$ is a subquotient of a finite direct sum of tensor powers of the standard representation $\bV = \bigcup_{n \geq 0}k^n$.
Let $\bV_* = \bigcup_{n \geq 0} (k^n)^*$ be the \textit{restricted} dual of $\bV$.
The group $\oGL$ acts on $\bV_*$ in the induced manner.
An \textit{algebraic} representation of $\oGL$ is a subquotient of a finite direct sum of tensor powers of $\bV^{\otimes n} \otimes \bV^{\otimes m}_*$ with $m,n \in \mathbb{Z}_{\geq 0}$.
An \textit{algebraic} representation of $\oO$ and $\oSp$ is a subquotient of a finite direct sum of tensor powers of the standard representation $\bV$.
As $\bV \cong \bV_*$ as $\oO$ and $\oSp$-representations, quotients of finite direct sums of $\bV^{\otimes n} \otimes \bV^{\otimes m}_*$ belong to the category of $\oO$ and $\oSp$ algebraic representations.

Let $E$ be an algebraic $G$-representation where $G$ is one of the above groups.
Its dual $E^*$ coincides with the closed points of $\operatorname{Spec}(k[E])$ from which it inherits the Zariski topology.

Consider a topological space $X$ with an action of a group $G$ and restrict the attention to closed subsets stable under the action of $G$.
The space $X$ is \textit{topologically} $G$-Noetherian if every descending chain of closed $G$-stable subsets stabilizes.
If $X$ is an algebraic variety, topological $G$-Noetherianity is equivalent to the fact that $X$ is cut out set-theoretically by the $G$-orbit of finitely many equations.

The main result of this paper is:

\begin{thm}\label{thm:main}
    Let $G$ be one of the groups $\oGL, \oO,$ or $\oSp$.
    Let $E$ be an algebraic $G$-representation.
    Consider the action of the group $\oSym \times G$ on the countable direct sum $E^{\oplus  \mathbb{N}}$ with $\oSym$ permuting the copies of $E$ and $G$ acting simultaneously on each copy of $E$.
    Then the dual space $(E^{\oplus \mathbb{N}})^*$ is topologically $\oSym \times G$-Noetherian.
\end{thm}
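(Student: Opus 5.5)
We first reduce to the case where $E$ is a finite direct sum of tensor powers $\bV^{\otimes d}$ (for $\oO$ and $\oSp$), or of mixed tensor powers $\bV^{\otimes n}\otimes\bV_*^{\otimes m}$ (for $\oGL$). An algebraic representation is a subquotient of such a sum $T$. A $\oSym\times G$-equivariant quotient $T'\to E$ gives a closed $\oSym\times G$-equivariant embedding $(E^{\oplus\bbN})^*\hookrightarrow (T'^{\oplus\bbN})^*$. A subrepresentation $T'\subseteq T$ gives a surjective restriction map $(T^{\oplus\bbN})^*\to(T'^{\oplus\bbN})^*$, which is continuous and equivariant. Topological Noetherianity passes to closed subsets and to images under continuous equivariant surjections, so it suffices to treat $E=T$. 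Enlarging $T$ further, we may take $E=\bigoplus_{i} \bV^{\otimes n_i}\otimes\bV_*^{\otimes m_i}$, a single block of tensors with a fixed finite "shape."

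For $G=\oGL$ we proceed as follows. In the polynomial case ($m_i=0$), the statement is the theorem of \cite{chiu-danelon-draisma-eggermont-farooq}, which we invoke directly. For mixed tensors we follow the Eggermont--Snowden strategy \cite{eggermont-snowden}. A point of $(E^{\oplus\bbN})^*$ is an $\bbN$-indexed family of tensors in $\bV_*^{\otimes n}\otimes\bV^{\otimes m}$-type completions. We run induction on the total degree, using the embedding/shift trick: given a proper closed $\oSym\times G$-stable $X$, pick a nonzero equation $f$. After localizing at a suitable minor, or at a coordinate appearing in $f$, we show that an open dense piece of $X$ is, up to a shift $G\to G_{n_0}$ (the stabilizer of the first $n_0$ basis vectors) and $\oSym\to\oSym_{\bbN\setminus S}$, isomorphic to a closed subset of $B\times (E'^{\oplus\bbN})^*$. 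Here $B$ is of finite type and $E'$ is smaller in a well-founded order on (multi)degree data. Combined with Noetherian induction on the complement $X\cap\{f\cdot(\text{orbit})=0\}$, this yields the result. The infinite copies are handled exactly as in \cite{chiu-danelon-draisma-eggermont-farooq}: the $\oSym$-factor is controlled by a Higman-type well-quasi-order argument on the finitely many "types" of copies. This is the mechanism that lets $B$ be replaced by $B^{\bbN}$ modulo $\oSym$, and that step is the main technical input imported from that paper.

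For $G=\oO$ and $\oSp$, the plan is to reduce to $\oGL$. Restricting along $\oGL\hookrightarrow\oO_{2\infty}$ (the Levi embedding $g\mapsto \mathrm{diag}(g,g^{-T})$ on $\bV=\bV'\oplus\bV'_*$), and similarly for $\oSp$, each algebraic $G$-representation becomes an algebraic $\oGL$-representation. The naive consequence goes the wrong way, since stabilization under a smaller group is stronger. So instead we use the shifting argument directly. Fix an isotropic decomposition. After localizing on an open set defined by a nonvanishing Gram-type invariant, a closed $\oSym\times\oO$-stable subset of $(E^{\oplus\bbN})^*$ becomes, up to the shifted orthogonal group, a product of finite-dimensional data with tensors for which the remaining symmetry contains $\oGL$ acting on a maximal isotropic subspace. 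There the $\oGL$ case applies.

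The main obstacle is this localization step for $\oO$/$\oSp$. One must show that every proper closed subset contains, in its defining ideal, an element whose nonvanishing locus splits off an orthogonal (or symplectic) finite-dimensional piece while the residual action still acts transitively enough to trigger the induction. We would first try to mimic Eggermont--Snowden's treatment of $\oO$ and $\oSp$ for a single copy of $E$, and then check that their decomposition is compatible with the $\oSym$-action.
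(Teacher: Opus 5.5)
Your reduction to finite sums of mixed tensor powers, and your use of \cite{chiu-danelon-draisma-eggermont-farooq} for polynomial $\oGL$-representations, are fine. Both remaining steps have problems.

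For $\oO$ and $\oSp$ you reject the argument that actually works. Noetherianity for a smaller group is the \emph{stronger} statement, and that is what the $\oGL$ case provides. Under the Levi embedding $\oGL\to\oO$ (resp.\ $\oGL\to\oSp$), every algebraic $\oO$-representation $F$ pulls back to an algebraic $\oGL$-representation. Every closed $\oSym\times\oO$-stable subset of $(F^{\oplus\bbN})^*$ is in particular $\oSym\times\oGL$-stable, so a descending chain of such subsets stabilizes by the $\oGL$ case. This is exactly Lemma~\ref{lm:morphismtrick} combined with \cite[Proposition~4]{eggermont-snowden}. The Gram-invariant localization you flag as the main obstacle is therefore unnecessary, and as written it is not carried out.

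The more serious gap is the mixed-tensor $\oGL$ case. What you sketch (localize, shift, land in $B\times(E'^{\oplus\bbN})^*$, control $\oSym$ by a Higman-type argument) is not the Eggermont--Snowden strategy. It amounts to asserting that the whole shift-and-induction machinery of \cite{chiu-danelon-draisma-eggermont-farooq}, proved only for polynomial representations, still works once $\bV_*$ is present. You give no argument for this. As you yourself note, a $\oGL$-shift acts on all copies at once, so the finite-type factor becomes $B^{\bbN}$. Handling $B^{\bbN}$-type factors under $\oSym$ is precisely the hard part of that paper, and you would have to redo it for non-polynomial representations.

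The missing idea is a reduction that avoids this entirely. Realize the algebraic $\oGL$-representation as the restriction of a polynomial $G=\oGL\times\oGL$-representation $E$ to $H=\{(g,{}^tg^{-1})\}$, which is the stabilizer of $I\in M^*$ for $M=\bV\otimes\bV$. Send $Z$ to $Z^+=\overline{(\oSym\times G)(\{I\}\times Z)}\subseteq M^*\times(E^{\oplus\bbN})^*$. This space is the closed locus in $((M\oplus E)^{\oplus\bbN})^*$ where all $M^*$-entries agree, and it is Noetherian by \cite{chiu-danelon-draisma-eggermont-farooq} applied to the diagonal $\oGL$. Then show $Z^+\cap(\{I\}\times(E^{\oplus\bbN})^*)=\{I\}\times Z$ (Proposition~\ref{prp:ZZplus}) via the LU-decomposition Lemma~\ref{lm:Htoh}.

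What makes this work with infinitely many copies is that each equation $f$ of $Z$ lives on $(E_n^{\oplus k})^*$ for some finite $n,k$, which is a polynomial $G$-representation. So there is an $H_f$ with $H_f(\phi_n(u,v),x)=m(u)\,f((u,v)\cdot x)$, vanishing on $Z^+$ but not at $(I,z)$ whenever $f(z)\neq 0$.
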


Actually, a more general version of this theorem is true:

\begin{crl}\label{crl:extension}
    Let $E_1, \dots, E_k$ be $G$-representations.
    The group $\oSym^k \times G$ acts on $E_1^{\oplus \mathbb{N}} \oplus \cdots \oplus E_k^{\oplus \mathbb{N}}$ with $i$-th copy of $\oSym$ permuting the summands of $E_i^{\oplus \mathbb{N}}$, and $G$ acting simultaneously on each $E_j$.
    Then the dual space of $E_1^{\oplus \mathbb{N}} \oplus \cdots \oplus E_k^{\oplus \mathbb{N}}$ is topologically $\oSym^k \times G$-Noetherian.
\end{crl}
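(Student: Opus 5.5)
The plan is to deduce Corollary~\ref{crl:extension} from Theorem~\ref{thm:main}, applied to the single representation $E := E_1 \oplus \cdots \oplus E_k$, together with a standard trick that exploits the extra coordinates coming from $\oSym$ acting on $E^{\oplus \mathbb{N}}$.

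\emph{Step 1: set up the comparison map.} Put $E = E_1 \oplus \cdots \oplus E_k$; this is again an algebraic $G$-representation. Fix a partition $\mathbb{N} = N_1 \sqcup \cdots \sqcup N_k$ into infinite sets, and fix bijections $\mathbb{N} \to N_i$. These give a $G$-equivariant linear surjection
\[
\pi \colon E^{\oplus \mathbb{N}} \to E_1^{\oplus \mathbb{N}} \oplus \cdots \oplus E_k^{\oplus \mathbb{N}}.
\]
On the $j$-th copy of $E$ with $j \in N_i$, the map $\pi$ projects onto the $E_i$-component and sends it to the corresponding copy of $E_i$; all other components are discarded. Dually, $\pi^*$ is a closed embedding $X := (E_1^{\oplus \mathbb{N}} \oplus \cdots \oplus E_k^{\oplus \mathbb{N}})^* \hookrightarrow Y := (E^{\oplus \mathbb{N}})^*$. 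Its image consists of tuples $(y_j)_j$ with $y_j \in E_i^* \subseteq E^*$ whenever $j \in N_i$. Let $H \subseteq \oSym$ be the subgroup preserving each $N_i$. Then $H \cong \oSym^k$, $\pi^*$ is $H \times G$-equivariant, and the image $Z := \pi^*(X)$ is closed and $H \times G$-stable.

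\emph{Step 2: pull back chains.} Take a descending chain $X \supseteq X_1 \supseteq X_2 \supseteq \cdots$ of closed $\oSym^k \times G$-stable subsets. Identify each $X_m$ with a closed subset of $Z$, and let $Y_m := \overline{(\oSym \times G) \cdot X_m} \subseteq Y$. This is a descending chain of closed $\oSym \times G$-stable subsets, so by Theorem~\ref{thm:main} it stabilizes. It then suffices to show that $X_m$ can be recovered from $Y_m$, i.e.\ that $Y_m \cap Z = X_m$.

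\emph{Step 3: recover $X_m$ from $Y_m$.} This is the main obstacle, because $\oSym$ mixes the blocks $N_i$ and the orbit $\oSym \cdot X_m$ leaves $Z$. The plan is to encode the block structure in the equations of $X_m$. Let $f$ be a polynomial vanishing on $X_m$. It involves finitely many coordinates, say from copies indexed by a finite set $S$. Define $\tilde f$ on $Y$ by replacing each coordinate $y_j$, $j \in S \cap N_i$, with its $E_i^*$-component, that is, $\tilde f = f \circ p$ with $p$ the $G$-equivariant projection $Y \to Z$ given by componentwise projection $E^* \to E_i^*$. It is clear that $p \circ \sigma$ maps $X_m$ into $X_m$ for $\sigma \in H$. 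The issue is what happens for general $\sigma \in \oSym$: a point of $X_m$ permuted by $\sigma$ carries an $E_{i'}^*$-vector in a slot from $N_i$, and $p$ then kills it.

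To handle this I would instead embed with disjoint supports. Choose the map $\pi$ so that the $j$-th copy of $E$ receives only $E_i$, and let $p$ land in $Z$. A permuted point $\sigma x$, after projection, becomes a point of $Z$ obtained from $x$ by permuting copies within blocks and setting some copies to $0$. If $X_m$ is stable under setting copies to zero, then $p(\sigma x) \in X_m$ by $\oSym^k$-stability. Setting copies to zero is a limit of scaling, but scaling is not available in general. So I would first replace $X$ by $X \times \{\text{a point } 0\}$-type truncations, or reduce to the case where the zero condition holds via a standard argument: a chain stabilizes if its images under the projections forgetting finitely many copies do. Granting this, $\tilde f$ vanishes on $\oSym \cdot X_m$, hence on $Y_m$. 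Its restriction to $Z$ is $f$, which gives $Y_m \cap Z = X_m$ and concludes the proof. If the zero-setting step fails, the fallback is to rerun the proof of Theorem~\ref{thm:main} with $\oSym^k$ in place of $\oSym$, since that proof only uses that the copies are interchangeable within each of finitely many types.
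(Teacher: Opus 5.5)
\textbf{There is a genuine gap in your main argument.} The problem is Step~3, and it goes deeper than your $\tilde f=f\circ p$ failing to vanish on $\oSym\cdot X_m$. For the embedding of Step~1 the equality $Y_m\cap Z=X_m$ is false, so no choice of equations can rescue Step~2. The reason is that $\oSym$ can move a zero $E_{i'}$-copy into a slot of $N_i$, where it looks exactly like a zero $E_i$-copy.

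Here is a counterexample. Take $G=\oGL$ and two summands $E_1=E_2=\bV$. Let $X_m$ be the set of points all of whose $E_1$-copies coincide; it is closed and $\oSym^2\times\oGL$-stable. Let $z\in Z$ have first $E_1$-copy $v\neq 0$ and all other copies $0$, so $z\notin X_m$. Suppose a polynomial $F$ vanishes on $\oSym\cdot X_m$ and involves only the slots in a finite set $S$. Take $w\in X_m$ with every $E_1$-copy equal to $v$ and every $E_2$-copy zero. Take $\sigma\in\oSym$ moving some slot of $N_1$ to the slot where $z$ carries $v$, and moving slots of $N_2$ onto the other slots of $S$. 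Then $\sigma w$ agrees with $z$ on $S$, so $F(z)=0$, and hence $z\in Y_m\cap Z$.

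So $X_m$ and the strictly larger closed $\oSym^2\times\oGL$-stable set $Y_m\cap Z$ have the same orbit closure in $Y$. The assignment $X_m\mapsto Y_m$ is therefore not injective, and stabilization of $(Y_m)$ tells you nothing about $(X_m)$. The zero-setting reduction you grant cannot be supplied, because the statement it is meant to deliver is false.

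\textbf{The missing idea is a way to keep the blocks visible after applying $\oSym$.} Within your strategy you can do this by tagging.
Replace $E$ by $E_1\oplus\cdots\oplus E_k\oplus T$, where $T$ is a trivial representation of dimension $k$; it is algebraic, being a sum of copies of $\bV^{\otimes 0}$. Fix a basis $t_1,\dots,t_k$ of $T^*$ and put $t_i$ in the $T^*$-component of every slot of $N_i$.

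Now take a finite set $S$ of slots. The projection of $\oSym\cdot X_m$ to $S$ is a finite union of pieces, each lying in one of the pairwise disjoint closed sets where the markers follow a prescribed pattern. A point of $Z$ can therefore only lie in the closure of the piece coming from those $\sigma$ with $\sigma^{-1}(S\cap N_i)\subseteq N_i$ for all $i$. By $\oSym^k$-stability, that piece is just the projection of $X_m$. This yields $Y_m\cap Z=X_m$, and Theorem~\ref{thm:main} applied to this single $E$ gives the corollary for all three groups at once.

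\textbf{Your fallback is the paper's route, but it is not carried out.} The paper reruns the proof of Proposition~\ref{prp:ZZplus} with $E_n^{\oplus r}$ replaced by $(E_1)_n^{\oplus r}\oplus\cdots\oplus(E_k)_n^{\oplus r}$. It then applies Theorem~\ref{thm:ours} with several types, using that $M^*\times(E_1^{\oplus\mathbb{N}}\oplus\cdots)^*$ is closed in $((M\oplus E_1)^{\mathbb{N}})^*\times(E_2^{\mathbb{N}})^*\times\cdots$. Finally it transfers the result to $\oO$ and $\oSp$ via Lemma~\ref{lm:morphismtrick} with $\oSym^k$ in place of $\oSym$. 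In your proposal this route is only mentioned.
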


\subsection{Examples}
For a vector space $E$, the countable product of its dual $(E^*)^\mathbb{N}$ coincides with the dual of its countable direct sum $(E^{\oplus \mathbb{N}})^*$.
We write a point $p \in (E^*)^{\oplus \mathbb{N}}$ as $p = (p_i)_{i \in \mathbb{N}}$ where $p_i\in E^*$ is the projection of $p$ on its $i$-th component.

(a) Consider the standard representation $\bV$ of $\oGL$.
Its dual $\bV^*$ can be identified with infinite column vectors with possibly infinitely many nonzero entries; these are the $k$-points of the spectrum of $\oSym(\bV)$.
The product $(\bV^*)^\mathbb{N}$ of countably many copies of $\bV^*$ carries commuting actions of the infinite symmetric group, permuting the factors, and $\oGL$, acting simultaneously on each factor.
We can think of $(\bV^*)^\mathbb{N}$ as $\mathbb{N}~\times~\mathbb{N}$-matrices where $\oSym$ permutes the columns and $\oGL$ acts by linear combinations of rows.
The coordinate ring of this space is $\oSym(\bV^{\oplus \mathbb{N}})$.

(b) Consider the restricted dual $\bV_*$ as an algebraic representation of $\oGL$.
We represent its elements as row vectors.
We can then think of $((\bV_*)^*)^\mathbb{N}$ as $\mathbb{N}~\times~\mathbb{N}$-matrices with possibly infinitely many nonzero entries where $\oSym$ permutes the rows and $\oGL$ acts by linear combinations of columns (multiplication now happens on the right).

(c) Consider $M = \bV \otimes \bV$.
Elements of $M$ are infinite-by-infinite matrices with finitely many nonzero entries.
The dual $M^*$ is made of matrices with possibly infinitely many nonzero entries.
Examples of closed $\oGL$-stable subsets of $M^*$ are the variety of symmetric matrices and the variety of antisymmetric matrices.
A point in the space $((\bV \otimes \bV)^*)^\mathbb{N}$ is then a countable collection of $\mathbb{N}\times \mathbb{N}$-matrices.
Its coordinate ring is $\oSym(\bigoplus_{i\in \mathbb{N}}\bV \otimes \bV)$.
A closed subvariety of $((\bV \otimes \bV)^*)^\mathbb{N}$ is given by the points whose entries are either a symmetric matrix or an antisymmetric matrix (\cite[Example~1.6]{chiu-danelon-draisma-eggermont-farooq}).

(d) Consider the algebraic $\oGL$-representation $A = \bV \otimes \bV_*$, and consider its dual $A^*$.
Geometrically this spaces is the same as above except that $\oGL$ acts as $(g^{-1})^t$ on the second factor.
In any case, Theorem~\ref{thm:main} holds also for $(A^*)^\mathbb{N}$.

(e) Restricting the action of $\oGL$ to either $\oO$ or $\oSp$ on the above examples gives new examples of Theorem~\ref{thm:main} for these groups.

(f) For $1\leq i < j \leq k$ consider map $\phi_{i,j}: \bV^{\otimes k} \to \bV^{\otimes (k-2)}$ sending $v_1 \otimes \cdots \otimes v_k \to (v_i^t v_j) v_1 \otimes \cdots \otimes \hat{v}_i \otimes \cdots \otimes \hat{v}_j \otimes \cdots \otimes v_k$ and extended linearly.
Let $\bV^{[k]}$ be the intersection of all the kernels of $\phi_{i,j}$.
The space $\bV^{[k]}$ is an algebraic representation of $\oO$.
Theorem~\ref{thm:main} guarantees that any subvariety in $((\bV^{[k]})^{\oplus \mathbb{N}})^*$ is topologically $\oSym \times \oO$-Noetherian.

\subsection{Connection to the literature}

Over infinite fields, Draisma showed in \cite{draisma} that the dual space $E^*$ of a polynomial $\oGL$-representation $E$ is topologically $\oGL$-Noetherian.
Later, over algebraically closed fields, Eggermont--Snowden in \cite{eggermont-snowden} extended Draisma's result to algebraic representations of the infinite orthogonal group $\oO$ and of the infinite symplectic group $\oSp$.

\begin{sloppypar}
In characteristic zero, the work \cite{chiu-danelon-draisma-eggermont-farooq} extends topological Noetherianity up to symmetry to powers of polynomial $\oGL$-representations, i.e. $\oSpec(\oSym(E^\bbN))$, with the action of $\oSym\times \oGL$.
This is a consequence of the following more general theorem.
\end{sloppypar}

\begin{thm}[\cite{chiu-danelon-draisma-eggermont-farooq}] \label{thm:ours}
    Consider polynomial $\oGL$-representations $E_1,$ $\dots,$ $ E_k$.
    Consider the action of the group $\oSym^k \times \oGL$ on $E_1^\mathbb{N} \times E_2^\mathbb{N}\times \cdots E_k^\mathbb{N}$ where the $i$-th copy of $\oSym$ permutes the copies of $E_i$ and $\oGL$ acts simultaneously on each factor $E_j$.
    Then the space $(E_1^\mathbb{N})^*\times \cdots \times (E_k^\mathbb{N})^*$ is topologically $\oSym^k \times \oGL$-Noetherian.
\end{thm}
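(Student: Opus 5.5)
Theorem~\ref{thm:ours} is cited from \cite{chiu-danelon-draisma-eggermont-farooq}, so in this paper it is simply invoked. Still, here is how I would prove it from scratch, since the same strategy should extend to $\oO$ and $\oSp$.

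\textbf{Reduction to one factor.} Set $E = E_1 \oplus \cdots \oplus E_k$. A point of $(E_1^\bbN)^*\times\cdots\times(E_k^\bbN)^*$ is a tuple of sequences. Interleave these sequences into a single sequence of points of $E^*$, where each point lies in exactly one summand $E_i^*$. The points supported on a single summand form a closed $\oSym \times \oGL$-stable subset of $(E^{\oplus\bbN})^*$. Permutations preserving the colouring by summands form the subgroup $\oSym^k$, and an infinite-pigeonhole argument shows that Noetherianity for the coloured problem follows from the uncoloured one. The key step is therefore the case $k=1$ with a single polynomial representation $E$.

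\textbf{Induction on the degree.} I would induct on the pair (top degree of $E$, multiplicity of the top-degree isotypic part), following Draisma's approach in \cite{draisma}. Let $X \subsetneq (E^\bbN)^*$ be closed and stable. Choose a nonzero equation $f$ vanishing on $X$; it involves only finitely many copies, say copies $1,\dots,m$, and finitely many coordinates of $\bV$. Differentiate $f$ in a top-degree direction to get a nonzero function $h$ of lower degree, so that $X = (X \cap \{h=0\}) \cup (X \cap \{h\neq 0\})$.

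\textbf{The open piece.} On $X_h = X\cap\{h \ne 0\}$, I would use the $\oGL$-shift trick. Fixing the first $n$ basis vectors and the first $m$ copies, the stabilizer of this finite data is a copy of $\oGL$ acting on the complementary part. I would then show that $X_h$ embeds $\oSym\times\oGL$-equivariantly, after the shift, into $B \times (E'^{\bbN})^*$. Here $B$ is a finite-dimensional variety, and $E'$ is a representation that is smaller in the chosen order: one top-degree component is eliminated because it can be solved for using $h\ne0$. To get this, the top-degree coordinates of every other copy $j>m$ must be expressed through the $\oSym$-translates of $f$ that exchange copy $j$ with one of copies $1,\dots,m$. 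The finite-dimensional factor $B$ is handled by Noetherianity of ordinary varieties together with the induction hypothesis.

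\textbf{The closed piece.} The set $X\cap\{h=0\}$ is closed but not stable. I would therefore replace it by the intersection over the orbit of the corresponding ideal, and apply Noetherian induction to this smaller closed stable subset.

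\textbf{Main obstacle.} The hardest step is the open piece. When $\oSym$ permutes copies, $h$ is not invariant, and its orbit meets infinitely many copies. I would need a "coloured" version of the induction, keeping track of finitely many distinguished copies as a finite-dimensional parameter space. The goal is to show that the localization is still covered by finitely many shifted spaces of strictly smaller type.
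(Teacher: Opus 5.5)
The paper does not prove this statement. It imports it from \cite{chiu-danelon-draisma-eggermont-farooq} as a black box, so the only question is whether your sketch works on its own. It does not: the open piece, which you flag as the main obstacle, fails as you describe it. Applying $(1\,j)$ to $f$ makes the top-degree coordinates of copy $j$ appear with coefficient $(1\,j)\cdot h$. This coefficient involves copy $j$ whenever $h$ involves copy $1$, so it is not a unit on $X\cap\{h\neq 0\}$. In Draisma's argument the new coordinates appear linearly with coefficient $h$ itself, because one expands $f((1+tA)\cdot p)$ in $t$ for $A$ mixing $U$ with its complement. $\oSym$ is discrete and has no such one-parameter translates.

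The simplest case already breaks the step. Take $E=k$ trivial and $X=V(\oSym\cdot(x_1^2-1))=\{\pm1\}^{\bbN}$. Then $h=2x_1$, $X\cap V(\oSym\cdot h)=\emptyset$, and $X\cap\{h\neq0\}=X$. Eliminating the only component leaves $E'=0$, and $\{\pm1\}^{\bbN\setminus[m]}$ cannot embed $\oSym$-equivariantly into a finite-dimensional $B$ with trivial action. The same happens in positive degree, e.g.\ in example (c) of the introduction, where each matrix is symmetric or antisymmetric. Take $f=x_{11}(x_{12}-x_{21})$ on copy $1$ and $h=x_{11}$. Inverting $h$ says nothing about whether copy $j>1$ is symmetric or antisymmetric, so no top-degree coordinate of copy $j$ can be solved for. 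Instead, $X\cap\{h\neq0\}$ is a union, over infinitely many colourings of the copies, of products of smaller types.

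The missing idea is the $\oSym$-combinatorial input. For $E=k^r$ trivial, the theorem is exactly Cohen's theorem that $(k^r)^{\bbN}$ is $\oSym$-Noetherian, which is usually proved by a well-quasi-order argument (Higman's lemma on Gr\"obner bases). Your induction has no such ingredient, and already its degree-zero layer needs one. Beyond that, you need a mechanism that controls, in a Noetherian way, the colourings of copies produced by localization. That is the "coloured induction" you mention, and it is the heart of the cited theorem rather than a technicality.

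Two smaller points:
\begin{itemize}
\item Your reduction to one factor fails as stated, because the zero vector lies in every $E_i^*$. For $E_1=E_2=k$, with the $b_j$ free, the sets $\{a_j=1\ \forall j\}\subsetneq\{a_j\in\{0,1\},\ \text{at most one }a_j=0\}$ have the same $\oSym$-saturation after interleaving: a copy with $a_j=0$ is indistinguishable from a copy with $b_j=0$. You can repair this by adding a trivial summand with coordinates $t_i$ satisfying $t_i^2=t_i$, $\sum_i t_i=1$, $(1-t_i)e_i=0$, which records the colour.
\item After the $\oSym$-shift, the fixed copies $1,\dots,m$ contribute shifted infinite-dimensional representations, not a finite-dimensional $B$. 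These factors, together with $B$, must be built into the induction hypothesis rather than dispatched by Noetherianity of ordinary varieties.
\end{itemize}
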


Our work is a natural follow up to \cite{chiu-danelon-draisma-eggermont-farooq} and to \cite{eggermont-snowden}.
The proof strategy is an extension of \cite{eggermont-snowden}.
However, our result doesn't directly follow from the result of \cite{eggermont-snowden} as the varieties we are dealing with are a generalization of the varieties involved in \cite{eggermont-snowden}.
Specifically, \cite[Proposition~6]{eggermont-snowden} is applied to a single polynomial $G$-representation $E$, while in our case we are dealing with the infinitely many copies $E^{\oplus \mathbb{N}}$.
So \cite[Proposition~6]{eggermont-snowden} is not directly available.
The new input is the observation that one only needs to work with $(E_n)^{\oplus k}$ seen as a polynomial $G$-representation, without the action of $\oSym$.

\subsection{Future directions}
It would be interesting to know if topological Noetherianity can be extended to the simultaneous actions of $\oGL, \oO, \oSp$.
In other words, if $E_1, E_2, E_3$ are, respectively, algebraic representations of $\oGL, \oO, \oSp$, is it true that the dual of $E_1 \oplus E_2 \oplus E_3$ is topologically Noetherian up to $\oGL\times\oO\times\oSp$ acting on the corresponding term?
If the above holds (as we believe), an even more general version of Theorem~\ref{thm:main} might be true:

\begin{cnj}
    Let $I, J, L$ be disjoint index sets.
    Let $E_i$ be an algebraic $\oGL$-representation if $i \in I$, an algebraic $\oO$-representation if $i \in J$, an algebraic $\oSp$-representation if $i \in L$.
    The dual of 
    \[
    \bigoplus_{i\in I} E_i^{\oplus \mathbb{N}} \oplus  \bigoplus_{j\in J} E_j^{\oplus \mathbb{N}} \oplus \bigoplus_{l\in L} E_l^{\oplus \mathbb{N}}
    \]
    is topologically $\oSym^{|I|}  \times \oGL \times \oSym^{|J|} \times \oO \times \oSym^{|L|} \times  \oSp $-Noetherian, where $\oGL$ acts simultaneously on the copies $E_i^*$ with $i \in I$, $\oO$ on the copies $E_j^*$ with $j \in J$, and $\oSp$ on the copies $E_l^*$ with $l \in L$.
\end{cnj}

The authors of \cite{chiu-draisma-eggermont-seynnaeve-tairi} introduced the half-spin varieties and showed that these are topologically Noetherian up to the action of $\operatorname{Spin}$, the direct limit of all spin groups.
A natural and interesting follow-up question is if powers of half-spin varieties are topologically Noetherian up to $\oSym \times \operatorname{Spin}$.
We plan to think about these questions in the future.

\subsection{The proof strategy}
The proof proceeds in two steps.
We first show that if Theorem~\ref{thm:main} holds for any of the groups $\oGL, \oO$, or $\oSp$ then it holds for the other two.
After this we show that Theorem~\ref{thm:main} holds for the group $\oGL$.
Below we extend to our setting some results of \cite{eggermont-snowden}.

The following lemma is the extension of \cite[Lemma~2.1]{eggermont-snowden} to include the symmetry of $\oSym$.
The key difference is the assumption of topological $\oSym\times H$-Noetherianity. See below for details.

\begin{lm}\label{lm:morphismtrick}
    Let $G$ and $H$ be distinct groups belonging to $\{\oGL, \oO, \oSp \}$.
    Consider a group homomorphism $\phi: H \to G$ such that the standard representation of $G$ pulls back to an algebraic representation of $H$.
    Assume that for an algebraic $H$-representation $E$ the space $(E^{\oplus \mathbb{N}})^*$ is topologically $\oSym \times H$-Noetherian.
    Then for any algebraic representation $F$ of $G$ the space $(F^\mathbb{N})^*$ is topologically $\oSym \times G$-Noetherian.
\end{lm}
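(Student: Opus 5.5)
The plan is to restrict the $G$-action along $\phi$ and use the fact that a $G$-stable closed set is in particular $H$-stable. The argument is a formal transfer along $\phi$, in the spirit of \cite[Lemma~2.1]{eggermont-snowden}. I read the hypothesis as holding for \emph{every} algebraic $H$-representation $E$, which is what is needed.

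First, let $F$ be an algebraic $G$-representation. I will show that its pullback $\phi^*F$ along $\phi$ is an algebraic $H$-representation.

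1. By definition, $F$ is a subquotient of a finite direct sum of tensor products $\bV_G^{\otimes n}\otimes (\bV_G)_*^{\otimes m}$. For $G\in\{\oO,\oSp\}$ only $\bV_G$ is needed, since $\bV_G\cong(\bV_G)_*$.
2. Pullback along $\phi$ is exact and commutes with direct sums and tensor products. So it suffices that $\phi^*\bV_G$ and $\phi^*(\bV_G)_*$ are algebraic $H$-representations.
3. $\phi^*\bV_G$ is algebraic by assumption.
4. For $\phi^*(\bV_G)_*$, I will identify it with the restricted dual of $\phi^*\bV_G$, using the filtration of $\bV_G$ by the finite-dimensional pieces $k^n$. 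I then use that the category of algebraic $H$-representations is closed under restricted duals. For $H=\oO,\oSp$ this is the self-duality of $\bV_H$. For $H=\oGL$ it is the fact that restricted dual swaps the roles of $\bV$ and $\bV_*$.

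Next, I compare the two group actions on the same space.

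1. As a $\oSym\times H$-representation, $F^{\oplus\mathbb{N}}$ equals $(\phi^*F)^{\oplus\mathbb{N}}$, with $\oSym$ permuting copies and $H$ acting diagonally through $\phi$.
2. The underlying spaces $(F^{\oplus\mathbb{N}})^*$ and $((\phi^*F)^{\oplus\mathbb{N}})^*$ coincide. Both carry the Zariski topology coming from the same coordinate ring $k[F^{\oplus\mathbb{N}}]$, so their topologies agree.
3. By hypothesis applied to $E=\phi^*F$, this space is topologically $\oSym\times H$-Noetherian.

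Now take a descending chain $X_1\supseteq X_2\supseteq\cdots$ of closed $\oSym\times G$-stable subsets of $(F^{\oplus\mathbb{N}})^*$. Each $X_i$ is stable under $\oSym\times\phi(H)\subseteq\oSym\times G$, hence is a closed $\oSym\times H$-stable subset. By $\oSym\times H$-Noetherianity the chain stabilizes, which proves the lemma.

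The expected obstacle is the step showing $\phi^*F$ is algebraic, specifically the claim about $\phi^*(\bV_G)_*$. One must check that pulling back the restricted dual agrees with taking the restricted dual of the pullback. This requires $\phi$ to map $H_n$ into some $G_{N(n)}$ compatibly with the filtrations. I would verify this first by writing $\phi^*\bV_G$ as an increasing union of finite-dimensional $H$-stable subspaces matching $\bigcup_n k^n$. If a general argument is awkward, I would instead check it directly for the specific maps $\phi$ used later, such as $\oO\to\oGL$, $\oSp\to\oGL$ and the block embeddings $\oGL\to\oO$ and $\oGL\to\oSp$ via $\bV\oplus\bV_*$.
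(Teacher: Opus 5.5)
Your proposal is correct and follows the same route as the paper. You pull $F$ back along $\phi$ to an algebraic $H$-representation, apply the hypothesis to $E=\phi^*F$, and observe that closed $\oSym\times G$-stable subsets are $\oSym\times H$-stable. Applying the hypothesis this way means reading it as holding for every algebraic $H$-representation, which the paper also does implicitly.

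Your extra check that $\phi^*(\bV_G)_*$ is algebraic, via compatibility of $\phi$ with the level filtrations, fills in a step the paper only asserts ("the assumptions guarantee"). Verifying it for the embeddings $\oO,\oSp\to\oGL$ that are used afterwards is enough.
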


\begin{proof}
    The assumptions guarantee that an algebraic representation $F$ of $G$ pulls back along $\phi$ to an algebraic representation $F$ of $H$, and that the space $(F^{\mathbb{N}})^*$ is topologically $\oSym \times H$-Noetherian.
    Consider a descending chain of closed $\oSym\times G$-stable subsets of $(F^\mathbb{N})^*$.
    These are also stable under $\oSym\times H$ and therefore the descending chain stabilizes.
\end{proof}

\begin{prp}
    If Theorem~\ref{thm:main} holds for one group in $\{\oGL, \oO, \oSp \}$, then it holds for the other two.
\end{prp}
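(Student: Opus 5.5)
The plan is to apply Lemma~\ref{lm:morphismtrick} repeatedly. For each ordered pair of distinct groups $(H,G)$ in $\{\oGL,\oO,\oSp\}$ we need a homomorphism $\phi:H\to G$ along which the standard representation $\bV_G$ of $G$ pulls back to an algebraic $H$-representation. Given such $\phi$, the lemma transfers Theorem~\ref{thm:main} from $H$ to $G$. It is enough to have a cycle of such transfers, for instance $\oGL\to\oO\to\oSp\to\oGL$, or to have maps in both directions between $\oGL$ and each of $\oO$ and $\oSp$.

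\emph{Maps out of $\oO$ and $\oSp$.} Take $\phi:\oO\hookrightarrow\oGL$ and $\phi:\oSp\hookrightarrow\oGL$ to be the natural inclusions. These are compatible with the filtrations, since $\oO_n\subset\oGL_n$ and $\oSp_{2n}\subset\oGL_{2n}$. The standard representation $\bV$ of $\oGL$ restricts to the standard representation of $\oO$ or of $\oSp$, which is algebraic by definition. So Theorem~\ref{thm:main} for $\oO$, or for $\oSp$, implies it for $\oGL$.

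\emph{Maps out of $\oGL$.} Use the hyperbolic embeddings. Identify the target's standard representation with $\bV\oplus\bV_*$ by interleaving bases $e_1,f_1,e_2,f_2,\dots$. Equip it with the symmetric form $\langle e_i,f_j\rangle=\delta_{ij}$, $\langle e_i,e_j\rangle=\langle f_i,f_j\rangle=0$, or with the alternating analogue. Let $g\in\oGL_n$ act as $g\oplus (g^{-1})^t$. This preserves both forms, giving homomorphisms $\oGL\to\oO$ and $\oGL\to\oSp$. Since $\oO$ and $\oSp$ are defined as unions over the standard forms, we compose with a fixed change of basis, over the algebraically closed field $k$, carrying this split form on $k^{2n}$ to the standard one. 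The bases should be chosen coherently in $n$, so that the maps $\oGL_n\to\oO_{2n}$ and $\oGL_n\to\oSp_{2n}$ are compatible with the inclusions and assemble into maps of direct limits. The pullback of the standard representation is $\bV\oplus\bV_*$, which is an algebraic $\oGL$-representation. Hence Theorem~\ref{thm:main} for $\oGL$ implies it for $\oO$ and $\oSp$.

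Combining the two directions: if Theorem~\ref{thm:main} holds for any one group, it holds for $\oGL$, and therefore for all three. Lemma~\ref{lm:morphismtrick} is stated for $F^{\mathbb{N}}$ and $E^{\oplus\mathbb{N}}$, but these give the same dual. I also need every algebraic $G$-representation to pull back to an algebraic $H$-representation. This holds because pullback commutes with tensor products, direct sums and subquotients, and $\bV_*$ pulls back to the restricted dual of an algebraic representation, which is again algebraic.

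The main obstacle is the bookkeeping in the $\oGL\to\oO,\oSp$ step. The change-of-basis matrices have to be chosen compatibly at every level $n$, so that one obtains a genuine homomorphism of the infinite groups. In the orthogonal case $\oO$ also contains odd levels $\oO_{2n+1}$, but the embedding only needs to land in $\bigcup_n \oO_{2n}$, which is cofinal. I would fix the split form directly as the defining form of $\oO$ and $\oSp$, which is harmless up to isomorphism, to avoid this issue.
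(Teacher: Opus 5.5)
Your proposal is correct and follows the paper's route. The paper applies Lemma~\ref{lm:morphismtrick} with the homomorphisms from \cite[Proposition~4]{eggermont-snowden}: the inclusions $\oO,\oSp\subset\oGL$, and the hyperbolic embeddings $\oGL\to\oO$, $\oGL\to\oSp$ under which the standard representation pulls back to $\bV\oplus\bV_*$. You give more detail than the paper (forms chosen coherently across levels, and a check that pullbacks of all algebraic representations, including $\bV_*$, stay algebraic), but the argument is the same.
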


\begin{proof}
For $G, H \in \{\oGL, \oO, \oSp \}$ we need to find a group homomorphism $\phi: H \to G$ such that we can apply Lemma~\ref{lm:morphismtrick}.
As there is no action of $\oSym$ involved, we can verbatim apply the proof of \cite[Proposition~4]{eggermont-snowden}.

    \end{proof}

\subsection{Auxiliary notation and result}\label{ssec:notandres}

A \textit{polynomial} representation of $\oGL \times \oGL$ is a subquotient of a finite direct sum of $\bV^{\otimes n}\otimes \bV^{\otimes m}$ with $n,m \in \mathbb{Z}_{\geq 0}$.
The use of \cite[Theorem~1.1]{chiu-danelon-draisma-eggermont-farooq} gives the following extension to \cite[Proposition~2.3]{eggermont-snowden}.

\begin{prp}
    Let $E$ be a polynomial representation of $\oGL \times \oGL$.
    Then $(E^{\oplus \mathbb{N}})^*$ is topologically $\oSym \times \left ( \oGL \times \oGL \right )$-Noetherian.
\end{prp}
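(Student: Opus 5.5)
The plan is to follow the proof of \cite[Proposition~2.3]{eggermont-snowden}, using Theorem~\ref{thm:ours} (that is, \cite[Theorem~1.1]{chiu-danelon-draisma-eggermont-farooq}) in place of Draisma's theorem. The idea is to embed $\oGL \times \oGL$ into a single $\oGL$ and transfer Noetherianity from a larger space.

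\textbf{Step 1: interleave the two factors.} Fix a bijection $\mathbb{N} \to \mathbb{N} \sqcup \mathbb{N}$, for instance even indices to the first copy and odd indices to the second. This gives an isomorphism $\bV \cong \bV \oplus \bV$ and a homomorphism $\phi: \oGL \times \oGL \to \oGL$, $(g,h) \mapsto g \oplus h$, whose image lies in $\oGL$ because finitary matrices stay finitary. Under $\phi$ the standard representation $\bV$ of $\oGL$ pulls back to $\bV_1 \oplus \bV_2$, where $\bV_1$ and $\bV_2$ are the standard representations of the two factors. Hence $\bV^{\otimes(n+m)}$ pulls back to a representation containing $\bV_1^{\otimes n} \otimes \bV_2^{\otimes m}$ as a direct summand. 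It follows that every polynomial $\oGL\times\oGL$-representation $E$ is a quotient, in fact a direct summand by semisimplicity in characteristic zero, of the restriction of some polynomial $\oGL$-representation $F$.

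\textbf{Step 2: reduce to $\bV_1^{\otimes n}\otimes \bV_2^{\otimes m}$.} First I check that the statement passes to subquotients. A quotient $E$ of $E'$ gives a closed $\oSym\times\oGL\times\oGL$-equivariant embedding $(E^{\oplus\mathbb{N}})^* \hookrightarrow (E'^{\oplus\mathbb{N}})^*$. A subrepresentation of $E'$ is a direct summand, so it is also a quotient. It therefore suffices to treat finite direct sums $E = \bigoplus_j \bV_1^{\otimes n_j}\otimes\bV_2^{\otimes m_j}$.

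Take $F = \bigoplus_j \bV^{\otimes (n_j+m_j)}$. Then $E$ is a direct summand of $\phi^*F$, so $(E^{\oplus\mathbb{N}})^*$ is a closed, $\oSym\times\oGL\times\oGL$-stable subset of $(F^{\oplus\mathbb{N}})^*$. This embedding is equivariant for $\oSym$ as well, because $\oSym$ permutes copies and does not touch the $E$-factor.

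\textbf{Step 3: the transfer.} This step is the main obstacle. Theorem~\ref{thm:ours} gives only $\oSym\times\oGL$-Noetherianity of $(F^{\oplus\mathbb{N}})^*$, where $\oGL$ is the full group, and $\phi(\oGL\times\oGL)$ is a strictly smaller group. So a chain of sets that are stable only under $\oSym\times\oGL\times\oGL$ is not directly covered. Lemma~\ref{lm:morphismtrick} goes in the wrong direction here.

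I would handle this as in Eggermont--Snowden. Given a closed $\oSym\times\oGL\times\oGL$-stable $Z\subseteq(E^{\oplus\mathbb{N}})^*$, let $\pi$ be the projection $(F^{\oplus\mathbb{N}})^*\to(E^{\oplus\mathbb{N}})^*$ and form the closed $\oSym\times\oGL$-stable set
\[
\widetilde Z=\overline{\oGL\cdot \pi^{-1}(Z)}.
\]
One then recovers $Z$ by intersecting: $Z=\widetilde Z\cap (E^{\oplus\mathbb{N}})^*$.

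To prove this recovery, I would use the standard shifting argument. An equation $f$ for $Z$ involves only finitely many coordinates, say the first $N$ even and odd indices. An element of $\oGL$ mixes the two index classes, but the $\oGL\times\oGL$-stability of $Z$, together with taking limits along the torus that separates the $\bV_1$-part from the $\bV_2$-part (a weight/degree argument for the bigrading by $(n,m)$), shows that mixed points do not create new points in the $E$-block. Equivalently, one shows that the bihomogeneous components of the ideal are preserved.

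Once this is established, a descending chain $Z_1 \supseteq Z_2 \supseteq \cdots$ gives a descending chain $\widetilde Z_1 \supseteq \widetilde Z_2 \supseteq \cdots$. The latter stabilizes by Theorem~\ref{thm:ours}, and intersecting back with $(E^{\oplus\mathbb{N}})^*$ shows that the original chain stabilizes as well. The role of $\oSym$ is harmless throughout, since it commutes with all the constructions above.
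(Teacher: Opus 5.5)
Your Step~3 is where the argument breaks, and the recovery claim $Z=\widetilde Z\cap (E^{\oplus\mathbb{N}})^*$ is in fact false. Take $E=\bV_1$ (i.e.\ $\bV^{\otimes 1}\otimes\bV^{\otimes 0}$), $F=\bV$ with $\bV_1$ the even coordinates, and $Z=\{0\}$. Then $\pi^{-1}(Z)$ is the set of points of $(\bV^*)^{\mathbb{N}}$ whose even coordinates vanish in every copy. Suppose $f$ vanishes on $\oGL\cdot\pi^{-1}(Z)$ and involves only coordinates $1,\dots,N$. Choose a finitary permutation matrix $g\in\oGL$ sending these coordinates to odd positions. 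Every point $B$ then agrees, on the coordinates seen by $f$, with some $g\cdot p$ with $p\in\pi^{-1}(Z)$, so $f(B)=0$. Hence $\widetilde Z$ is the whole space, exactly as it is for $Z=(E^{\oplus\mathbb{N}})^*$, and $Z\mapsto\widetilde Z$ is not injective. Your appeal to a torus separating $\bV_1$ from $\bV_2$ cannot rescue this, because the full $\oGL$ you act with does not preserve that splitting, and that is precisely the information the closure destroys. Replacing $\pi^{-1}(Z)$ by the image of $Z$ under the inclusion would still require a genuine Eggermont--Snowden-type injectivity argument, which you do not supply.

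The missing observation is that you chose the homomorphism in the wrong direction, and this is what forced you into a hard transfer. Use instead the diagonal $\Delta\colon\oGL\to\oGL\times\oGL$, $g\mapsto(g,g)$. Along $\Delta$, the representation $\bV^{\otimes n}\otimes\bV^{\otimes m}$ restricts to $\bV^{\otimes(n+m)}$, so $E$ is a polynomial $\oGL$-representation. Theorem~\ref{thm:ours} with $k=1$ then says $(E^{\oplus\mathbb{N}})^*$ is topologically $\oSym\times\Delta(\oGL)$-Noetherian. Every closed $\oSym\times(\oGL\times\oGL)$-stable subset is in particular $\oSym\times\Delta(\oGL)$-stable, so every descending chain of such subsets stabilizes. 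This two-line argument is the paper's proof: Noetherianity under a subgroup passes for free to the larger group, so your Steps~1--3 are unnecessary.
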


\begin{proof}
    Since $\oGL$ embeds diagonally into $\oGL \times \oGL$, the vector space $E$ is a polynomial $\oGL$-representation.
    By Theorem~\ref{thm:ours} with $k = 1$, the space $(E^{\oplus \mathbb{N}})^*$ is topologically $\oSym \times \oGL$-Noethe\-rian, and hence it is topologically $\oSym \times \left ( \oGL \times \oGL \right )$-Noetherian.
 \end{proof}

\section{The general linear case}
We extend \cite[Section~4]{eggermont-snowden} to our setting.
Let $H$ be the subgroup of $G = \oGL \times \oGL$ given by elements of the form $(g, {}^t g^{-1})$.
An algebraic $\oGL$-representation is then given by the restriction of a polynomial $\oGL \times \oGL$-representation $E$ to the subgroup $H$.
Let $E$ denote a fixed $\oGL \times \oGL$-representation.
We prove that $(E^{\oplus \mathbb{N}})^*$ is topologically $\oSym \times H$-Noethe\-rian.

Let the group $G$ act on an element $A \in M = \bV \otimes \bV$ by $(g,h) \cdot A = g\; A {\;}^t h$.
Its induced action on an element $A \in M^*$ is $(g,h)\cdot A = {\;}^t g^{-1} A h^{-1}$.
Note that the stabilizer in $G$ of the infinite identity matrix $I\in M^*$ is $H$.

Since $E$ is a polynomial $\oGL\times\oGL$-representation it is a subquotient of $\bigoplus_{i = 1}^l\bV^{\otimes m_i} \otimes \bV^{\otimes n_i}$ for some $l$.
Define $E_n$ to be the restriction of $E$ to the vectors involving only the first $n$ coordinates.
In other words, $E_n$ is a subquotient of $\bigoplus_{i = 1}^l (k^n)^{\otimes m_i} \otimes (k^n)^{\otimes n_i}$.
We call $E_n$ the \textit{level} $n$ of $E$.

For $M = \bV \otimes \bV$ the space $M_n$ is $k^n \otimes k^n$.
In $M^*$, consider the subspace $U^*$ of upper-triangular matrices, and let $U^*_1$ be the subspace of $U^*$ where the entries on the diagonal are all $1$.
Define $W^*$ as $ U^* \times U^*_1$ and consider the map $\phi: W^* \to M^*$ sending $(u,v)$ to ${}^t u v$.
Let $\pi_n: M^* \to M_n$ be the projection on the top-left $n \times n$-block.
Let $\phi_n$ be the restriction of $\phi$ to $W_n$ mapping in $M_n$.

\begin{lm}\label{lm:Htoh}
    Let $X$ be an affine variety over $k$ and consider a regular map $h: W^*_n \times X \to k$.
    Then there exists a regular map $H : M^*_n \times X \to k$ such that for any $((u,v),x)\in W^*_n \times X$ we have $H(\phi_n(u,v), x) = m(u)h((u,v),x)$ where $m(u)$ is a monomial depending only on the diagonal entries of $u$.
\end{lm}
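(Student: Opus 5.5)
This is an LU-type factorization statement. The plan is to use the fact that $\phi_n\colon W^*_n \to M^*_n$, $(u,v)\mapsto {}^t u\, v$, is an isomorphism onto the open subset of $M^*_n$ where all leading principal minors are nonzero, after inverting the diagonal entries of $u$. We then clear denominators.

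\textbf{Step 1: invert $\phi_n$ over the locus of nonvanishing minors.} Write $A = {}^t u v$ with $u$ upper triangular with diagonal $d_1,\dots,d_n$ and $v$ upper unitriangular. Then ${}^t u$ is lower triangular, and the $i$-th leading principal minor of $A$ is $\Delta_i(A) = d_1\cdots d_i$. Hence $d_i = \Delta_i/\Delta_{i-1}$.

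\textbf{Step 2: express the entries of $u$ and $v$ through $A$.} The remaining entries come from the standard $LU$ formulas. The entries of ${}^t u$ and $v$ are ratios of minors of $A$ (bordered leading minors) divided by $\Delta_{i-1}$ or $\Delta_i$. Alternatively, solve the triangular system recursively. Either way, every coordinate of $(u,v)$ is a polynomial in the entries of $A$ divided by a product of the $\Delta_j$.

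Pulling back through $\phi_n$, a product $\prod_j \Delta_j^{e_j}$ becomes a monomial in $d_1,\dots,d_n$, namely $\prod_j (d_1\cdots d_j)^{e_j}$. So there is a polynomial map $\psi\colon M^*_n \to W^*_n$ and a product $D(A)$ of leading minors with
\[
D(\phi_n(u,v))\cdot(u,v) = \psi(\phi_n(u,v)),
\]
coordinatewise in the appropriate sense, whenever all $d_i \neq 0$. Since both sides are polynomial, the identity then holds identically on $W^*_n$.

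\textbf{Step 3: clear denominators in $h$ and extend to all $u$.} Because $X$ is affine, $h$ is an element of $k[W^*_n]\otimes k[X]$, i.e.\ a polynomial in the coordinates of $(u,v)$ with coefficients in $k[X]$. Let $N$ be its total degree in the $(u,v)$-coordinates. Define
\[
H(A,x) = D(A)^{N}\, h\bigl(\psi(A)/D(A),x\bigr),
\]
which is a polynomial in $A$, since each monomial of degree at most $N$ absorbs the denominators. By construction $H$ is regular on $M^*_n\times X$. On the locus where $\det u\neq 0$ we get $H(\phi_n(u,v),x) = m(u)\,h((u,v),x)$ with $m(u) = D(\phi_n(u,v))^N$, which by Step 1 is a monomial in the diagonal entries of $u$ alone. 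Both sides of this identity are polynomial on $W^*_n\times X$ and agree on a dense open subset, so they agree everywhere.

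The main obstacle is Step 2: checking carefully that the entries of $u$ and $v$ are polynomials in $A$ divided only by leading principal minors, so that the multiplier depends only on the diagonal of $u$. I would do this by induction on $n$ via block elimination of the first row and column. Eliminating with the pivot $a_{11} = d_1$ gives a Schur complement, which is itself a ratio of a polynomial by $d_1$ and factors as ${}^t u' v'$ at level $n-1$. Applying induction to it accumulates only powers of the diagonal entries $d_i$.
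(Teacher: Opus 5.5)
Your proof is correct and follows the same LU-decomposition argument as \cite[Lemma~8]{eggermont-snowden}, which the paper cites without reproducing: $\phi_n$ identifies the locus where the diagonal of $u$ is invertible with the locus of nonvanishing leading principal minors, and one clears denominators with a power of a product of leading principal minors, whose pullback $\Delta_i(\phi_n(u,v)) = d_1\cdots d_i$ is a monomial in the diagonal of $u$. A small simplification: since you already have $D(\phi_n(u,v))\,c = \psi_c(\phi_n(u,v))$ on all of $W^*_n$ for each coordinate $c$, substituting this into $H$ gives $H(\phi_n(u,v),x) = D(\phi_n(u,v))^N h((u,v),x)$ identically, so the final density argument is unnecessary.
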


\begin{proof}
    \cite[Lemma~8]{eggermont-snowden}.
\end{proof}

For a polynomial $\oGL$-representation $E$ the action of $\oGL_n$ extends to an action of its Lie algebra $M_n$, and therefore to an action of $M$ on $E$ and on $E^*$.
The action of $U$ extends to an action of $U^*$ on $E$.
Under this action, for a $u\in U^*$ and an $e \in E$ the element $u\cdot e$ can be computed by the restriction of $u$ to the top-left $n\times n$-block where $n$ is the level $E_n$ where the element $e$ sits.
The same reasoning works for the action of $U^*$ on $(E_n)^*$.

Consider a $\oSym \times H$-stable closed subset $Z$ of $(E^{\oplus \mathbb{N}})^*$, and let $Z^+$ be the closure of the $\oSym \times G$-orbit of $\{I\} \times Z$ in $M^* \times (E^{\oplus \mathbb{N}})^*$ with $\oSym$ acting trivially on the first component.
Note that $M^* \times (E^{\oplus \mathbb{N}})^*$ is isomorphic to the closed subvariety in $((M\oplus E)^\mathbb{N})^*$ of points whose entries in  $M^*$ are all equal.
In particular $Z^+$ is a closed subvariety of $((M\oplus E)^\mathbb{N})^*$.

\begin{prp}\label{prp:ZZplus}
    The intersection $(\{I\}\times (E^{\oplus \mathbb{N}})^*) \cap Z^+$ is equal to $\{I\} \times Z$.
\end{prp}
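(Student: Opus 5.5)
The inclusion $\{I\}\times Z \subseteq (\{I\}\times (E^{\oplus \mathbb{N}})^*)\cap Z^+$ is immediate from the definition of $Z^+$. The content is the reverse inclusion. I would follow the argument of \cite[Proposition~9]{eggermont-snowden}, with two changes: carry the $\oSym$-action along, and use the fact that any single regular function on $(E^{\oplus \mathbb{N}})^*$ only involves finitely many copies of $E$, at a finite level $n$.

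\emph{Reduction to one equation.} Let $(I,p)\in Z^+$ and suppose, for contradiction, that $p\notin Z$. Then there is a polynomial $f\in k[(E^{\oplus \mathbb{N}})^*]$ vanishing on $Z$ with $f(p)\neq 0$. Since $f$ involves only finitely many coordinates, there are $n$ and $k$ with $f\in k[((E_n)^{\oplus k})^*]$, the coordinate ring of the first $k$ copies at level $n$. This is where the $\oSym$-action drops out: we work with $(E_n)^{\oplus k}$ simply as a representation of $\oGL_n\times\oGL_n$, as the introduction announces.

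\emph{Constructing an equation for $Z^+$.} On the Borel-type chart, define
\[
h((u,v),x) := f\bigl((u,v)^{-1}\cdot x\bigr),
\]
where $(u,v)\in W_n^*$ acts on $x\in ((E_n)^{\oplus k})^*$ through the extended action of $U^*$ described before the statement, suitably twisted so that $\phi_n(u,v)={}^tuv$ corresponds to $(u,v)\cdot I$. Because $u$ is upper triangular with diagonal entries that must be inverted, $h$ is regular only after clearing a power of the diagonal of $u$. Lemma~\ref{lm:Htoh} is designed for exactly this: applied with $X=((E_n)^{\oplus k})^*$, it gives a regular $H$ on $M_n^*\times X$ with
\[
H(\phi_n(u,v),x)=m(u)\,h((u,v),x).
\]
Pull $H$ back to $M^*\times (E^{\oplus\mathbb{N}})^*$ via $\pi_n$ and the projection to the first $k$ copies.

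\emph{Checking the two vanishing conditions.} We need two facts.

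First, $H$ vanishes on $Z^+$. It suffices to check this on the $\oSym\times G$-orbit of $\{I\}\times Z$, which is dense in $Z^+$.
- A point $(g\cdot I,g\cdot z)$ with $g\in G$, whose $M^*$-component has an LU-type decomposition $\phi_n(u,v)$ at level $n$, gives $h=f(h'\cdot z)$ for some $h'\in H$, after adjusting by the stabilizer $H$. This vanishes because $Z$ is $H$-stable.
- The $\oSym$-translates are handled by the fact that $Z$ is $\oSym$-stable, so $\sigma z\in Z$ as well. One must be careful that $\oSym$ acts only on the $E$-factor, which it does.
- Points whose level-$n$ block lacks such a decomposition form a lower-dimensional set; density of decomposable ones handles them by continuity.

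Second, $H(I,p)=m(1)f(p)\neq 0$. This holds because $I=\phi(1,1)$. So $(I,p)\notin Z^+$, a contradiction.

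\emph{Main obstacle.} The delicate step is the first vanishing claim. One has to justify that the relevant orbit points can be reached through $\phi_n$ at the finite level $n$. In particular, $g\in \oGL\times\oGL$ acts on $M^*$ by ${}^tg^{-1}Ah^{-1}$, so the level-$n$ block of $g\cdot I$ depends on rows and columns beyond $n$. The plan is to choose $N\ge n$ large enough that $g\in \oGL_N\times\oGL_N$. Then, using that $\phi$ restricted to the top-left block factors through $\pi_n$ (the map ${}^tuv$ is block-triangular), one reduces to the level-$n$ statement of Lemma~\ref{lm:Htoh} and invokes density of the big cell.
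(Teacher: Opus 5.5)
Your proposal is correct and follows essentially the same route as the paper. It uses the same reduction of $f$ to $((E_n)^{\oplus k})^*$ and the same $H_f$ obtained from Lemma~\ref{lm:Htoh}. Vanishing on $Z^+$ comes from density of $W_N^*H_N$ in $G_N$ together with $H$-stability of $Z$, and non-vanishing at $(I,p)$ from $m_f(I)f(p)\neq 0$.

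You are more explicit than the paper on two points it uses only implicitly. First, the level issue: you LU-decompose at a level $N\geq n$ with $g\in G_N$, and use that the top-left $n\times n$ block of ${}^tuv$ depends only on the top-left blocks of $u$ and $v$. Second, the $\oSym$-factor, which you absorb into $z$ since it acts trivially on $M^*$. One correction: in your first bullet the decomposition must be taken at level $N$, not just at level $n$, as your final paragraph in fact does.
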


    Proposition~\ref{prp:ZZplus} is the generalization of \cite[Proposition~6]{eggermont-snowden} to the group $\oSym \times H$.
    Indeed, we cannot directly apply \cite[Proposition~6]{eggermont-snowden} as $(E^{\oplus \mathbb{N}})$ is not a polynomial representation of $G$ (it is not finite).
    However, a close look at the proof of \cite[Proposition~6]{eggermont-snowden} shows that we only need to look at $(E_n^{\oplus k})$, that is a polynomial $G$-representation.
    This observation also allows to extend the proof of Theorem~\ref{thm:main} to prove Corollary~\ref{crl:extension}.
    Indeed, we can apply the same argument and substitute $(E_n^{\oplus k})$ with $(E_1)_n^{\oplus k} \oplus \cdots \oplus (E_l)_n^{\oplus k}$ for some polynomial $G$-representations $E_i$.

An immediate consequence of Proposition~\ref{prp:ZZplus} is topological $\oSym\times \oGL$-Noetheria\-nity of $(E^{\oplus \mathbb{N}})^*$:

\begin{crl}
    The map $Z \to Z^+$ is an injection from the set of closed $\oSym \times H$-stable subsets of $(E^{\oplus \mathbb{N}})^*$ to 
    the set of closed $\oSym \times G$-stable subsets of $M^* \times (E^{\oplus \mathbb{N}})^*$.
    In particular, the space $(E^{\oplus \mathbb{N}})^*$ is topologically $\oSym \times H$-Noetherian.
\end{crl}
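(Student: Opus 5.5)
The plan is to show that $Z \mapsto Z^+$ is well defined, then that it is injective via Proposition~\ref{prp:ZZplus}, and finally to deduce Noetherianity from the earlier proposition on polynomial $\oGL\times\oGL$-representations.

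\emph{Well-definedness.} By construction $Z^+$ is the closure of a $\oSym\times G$-orbit, so it is closed and $\oSym\times G$-stable in $M^*\times (E^{\oplus\mathbb{N}})^*$.

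\emph{Injectivity.} Suppose $Z_1, Z_2$ are closed $\oSym\times H$-stable subsets with $Z_1^+ = Z_2^+$. Intersect both sides with $\{I\}\times (E^{\oplus\mathbb{N}})^*$. Proposition~\ref{prp:ZZplus} gives $\{I\}\times Z_1 = \{I\}\times Z_2$, hence $Z_1 = Z_2$.

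\emph{Inclusions are preserved, including strictness.} If $Z_1\subseteq Z_2$, the orbit of $\{I\}\times Z_1$ lies in the orbit of $\{I\}\times Z_2$, so $Z_1^+\subseteq Z_2^+$ after taking closures. A strictly descending chain $Z_1\supsetneq Z_2\supsetneq\cdots$ of closed $\oSym\times H$-stable subsets therefore maps to a descending chain $Z_1^+\supseteq Z_2^+\supseteq\cdots$. By injectivity this image chain is again strictly descending.

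\emph{Noetherianity of the target.} As observed before Proposition~\ref{prp:ZZplus}, $M^*\times (E^{\oplus\mathbb{N}})^*$ is the closed subvariety of $((M\oplus E)^{\oplus\mathbb{N}})^*$ whose $M^*$-entries all coincide. Under this identification, closed $\oSym\times G$-stable subsets of $M^*\times(E^{\oplus\mathbb{N}})^*$ are closed $\oSym\times G$-stable subsets of $((M\oplus E)^{\oplus\mathbb{N}})^*$, since $\oSym$ acts trivially on the diagonal $M^*$ factor. Now $M\oplus E$ is a polynomial $\oGL\times\oGL$-representation, because $M=\bV\otimes\bV$ with $G$ acting by $g\,A\,{}^th$. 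So the proposition in Subsection~\ref{ssec:notandres} (via Theorem~\ref{thm:ours}) makes $((M\oplus E)^{\oplus\mathbb{N}})^*$ topologically $\oSym\times G$-Noetherian. Closed subspaces inherit this property, so the image chain stabilizes, contradicting strictness. Hence $(E^{\oplus\mathbb{N}})^*$ is topologically $\oSym\times H$-Noetherian.

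The only delicate point is the identification of $M^*\times(E^{\oplus\mathbb{N}})^*$ with a closed stable subvariety. One must check that the diagonal locus in $(M^{\oplus\mathbb{N}})^*$ is cut out by the linear equations $x_i-x_j=0$, and that this locus is $\oSym\times G$-stable. Both facts are immediate, and all the real content sits in Proposition~\ref{prp:ZZplus}.
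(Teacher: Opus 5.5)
Your proposal is correct and follows essentially the paper's proof: send a chain $Z^\bullet$ to $(Z^\bullet)^+$ inside the closed, $\oSym\times G$-stable diagonal locus of $((M\oplus E)^{\oplus\mathbb{N}})^*$, get stabilization there from Theorem~\ref{thm:ours} (you route it through the auxiliary proposition, which the paper proves from that theorem), and pull stabilization back with Proposition~\ref{prp:ZZplus}. You also spell out what the paper leaves implicit: that $Z^+$ is closed and stable, that the map is monotone, and that the diagonal identification is equivariant.
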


The above corollary plays the role of \cite[Corollary~7]{eggermont-snowden}.
The main difference is the use of the fact that $((M\oplus E)^\mathbb{N})^*$ is topologically $\oSym \times \oGL$-Noetherian.

\begin{proof}
    Consider a chain $Z^\bullet$ of $\oSym \times H$-stable closed subsets of $(E^{\oplus \mathbb{N}})^*$ and let $(Z^\bullet)^+$ be the corresponding chain in $M^* \times (E^{\oplus \mathbb{N}})^*$.
    The space $((M\oplus E)^\mathbb{N})^*$ is $\oSym \times \oGL$-Noetherian by Theorem~\ref{thm:ours}, and hence $\oSym \times G$-Noetherian.
    As $M^* \times (E^{\oplus \mathbb{N}})^*$ is a closed subvariety of $((M\oplus E)^\mathbb{N})^*$, the $\oSym \times G$-stable chain $(Z^\bullet)^+$ stabilizes, and Proposition~\ref{prp:ZZplus} guarantees that so does $Z^\bullet$.
    As every chain of $\oSym \times H$-stable closed subsets stabilizes, the space $(E^{\oplus \mathbb{N}})^*$ is topologically $\oSym \times H$-Noetherian.
\end{proof}

For completeness, we write the proof of Proposition~\ref{prp:ZZplus} showing that \cite[Section~4]{eggermont-snowden} follows through in our case.
The proof splits into two steps.
Let $Z$ be a $\oSym \times H$-stable closed subset of $(E^{\oplus \mathbb{N}})^*$.

\begin{enumerate}
    \item Given any defining equation $f$ for $Z$, define a regular function $H_f$ on $M^*\times (E^{\oplus \mathbb{N}})^*$ vanishing on $Z^+$.
    \item Show that if a point $p$ is not a zero of a defining equation $f$ of $Z$, then it is not a zero of $H_f$.
\end{enumerate}

\begin{proof}[Proof~of~Proposition~\ref{prp:ZZplus}] 
Let $Z$ be as above.
Consider a defining equation $f$ of $Z\subset (E^{\oplus \mathbb{N}})^*$.
Since $f$ is a polynomial, the coordinates of $f$ refer to a finite number of copies of $E$, and only up to a fixed level in each of these copies.
We can then assume that $f$ is a regular function on $(E_n^{\oplus k})^*$ for some suitable $k$ and $n$.
Define the function $h_f : W^*_n \times (E^{\oplus k})^*_n \to k$ sending $(w,x)$ to $f(w \cdot x)$.
The element $w$ acts on $x$ simultaneously on each factor $(E_n^{\oplus k})^*$ as described above.

Lemma~\ref{lm:Htoh} applied to $h_f$ gives a regular function $H_f$ on $M_n^* \times (E^{\oplus k})^*_n$ such that $H_f(\phi_n(u,v), x) = m_f(u)h_f((u,v),x)$ on $W_n^* \times (E^{\oplus k})^*_n$, where $m_f$ is a monomial depending only on the diagonal entries of $u$.

We claim that the polynomial $H_f$ vanishes on $Z^+$ since it vanishes on a dense subset of $Z^+$.
Let $G_m = \oGL_m \times \oGL_m$, and let $H_m$ be the subgroup given by elements $(g, {}^tg^{-1})$ with $g\in \oGL_m$.
The product $W^*_m H_m = (ug, v{\;}^tg^{-1})$ is dense in $G_m$ because they both have dimension $2m^2$.
One can check that the fibers of $W^*_m\times H_m \to M_m \times M_m$ have dimension zero since $U^*_1$ guarantees uniqueness of LU-decomposition.
This means that the orbit under $W^*_m H_m$ of $\{I\}\times Z$ is dense in the orbit under $G_m$ of $\{I\} \times Z$.

Let $z^+$ be an element of the  $W^*_m H_m$-orbit.
We can write $z^+$ as $wh(I, z) = ({}^t(ug)^{-1} I (v {\;}^tg^{-1})^{-1}, wh\cdot z) = ({}^tu^{-1} {\;}^tg^{-1} I {\;}^tg v^{-1}, wh\cdot z) = ({}^tu^{-1} I v^{-1},wh\cdot z) = (\phi_m(u^{-1}, v^{-1}), wh z)$, where $w = (u,v)$ and $h = (g, {}^tg^{-1})$ and $z \in Z$.
Since $hz$ is a point of $Z$, it must vanish on $f$, then the evaluation of $H_f$ at this point gives:
\begin{equation*}
    H_f(z^+) = H_f(\phi(w^{-1}), whz) = m(u^{-1})f(w^{-1} wh z) =  m(u^{-1})f(h z) = 0.
\end{equation*}
Hence, $H_f$ vanishes on a dense subset of $Z^+$ and therefore on $Z^+$.

Let $z$ be a point in $(E^{\oplus \mathbb{N}})^*$ that is not on $Z$ and let $f$ be a defining equation for $Z$ such that $f(z) \neq 0$.
Then $(I, z) \in Z^+$ and $H_f((I, z)) \neq 0$ because
\begin{equation*}
    H_f((I, z)) = m_f(I) h_{f}((I,I), z) = m_f(I)f(z) \neq 0.
\end{equation*}
Indeed, the diagonal entries of $I$ are ones, so $m_f(I) \neq 0$ and $f(z) \neq 0$ by hypothesis.
\end{proof}

\section{Declarations}
The author does not have conflicts of interest. Data availability: N/A.

\printbibliography
\end{document}